\newcommand{\Z}{\mathbb{Z}}
\newtheorem{theorem}[equation]{Theorem}
\newtheorem{lemma}[equation]{Lemma}
\newtheorem{proposition}[equation]{Proposition}
\newtheorem{remark}[equation]{\bf Remark}
\newtheorem{definition}[equation]{\bf Definition}
\numberwithin{equation}{section}
\numberwithin{table}{section}
\author{Dorian Goldfeld \and Eric Stade \and Michael Woodbury}
\address{Dept. of Mathematics\\Columbia University\\ 2990 Broadway \\ New York, NY 10027, USA}
\email{goldfeld@columbia.edu} 
\address{Dept. of Mathematics\\
University of Colorado Boulder\\
Boulder, Colorado 80309, USA}
\email{stade@colorado.edu}
\address{Dept. of Mathematics \\Rutgers, The State University of New Jersey\\
110 Frelinghuysen \phantom{u.} Rd\\
Piscataway, NJ 08854-8019, USA}
\email{michael.woodbury@rutgers.edu} 
\thanks{Dorian Goldfeld is partially supported by Simons Travel Grant: MP-TSM-00001990.}
\title{\protect\parbox{\textwidth}{\protect\centering Multiplicativity of Fourier Coefficients\\ of Maass Forms for SL($ \MakeLowercase n, \mathbb Z$)}}
\begin{document}

\dedicatory{Dedicated to Enrico Bombieri on the occasion of his 85th birthday.}

\maketitle

\begin{abstract}
The Fourier coefficients of a Maass  form $\phi$ for SL$(n,\mathbb Z)$ are complex numbers $A_\phi(M)$, where $M=(m_1,m_2,\ldots,m_{n-1})$ and $m_1,m_2,\ldots ,m_{n-1}$ are nonzero integers.  It is well known that coefficients of the form $A_\phi(m_1,1,\ldots,1)$  are eigenvalues of the Hecke algebra and are multiplicative. We prove that the more general Fourier coefficients $A_\phi(m_1,\ldots,m_{n-1})$ are also eigenvalues of the Hecke algebra and satisfy the multiplicativity relations
$$A_\phi\big(m_1m_1',\;m_2m_2', \;\ldots\; m_{n-1}m_{n-1}'\big) = A_\phi\big(m_1,m_2,\ldots,m_{n-1})\cdot A_\phi(m_1',m_2',\ldots,m_{n-1}'\big)$$
provided the products $\prod\limits_{i=1}^{n-1} m_i$ and $\prod\limits_{i=1}^{n-1} m_i'$ are relatively prime to each other. 
\end{abstract}

\section{\large \bf Introduction}

Let $\pi$ be a unitary cuspidal automorphic representation of GL$(n,\mathbb Q)$ for $n\ge 2.$ Associated to $\pi$ we have the Godement-Jacquet L-function \cite{GJ1972} given by
$$L(s,\pi) = \sum_{n=1}^\infty \frac{\lambda_{\pi}(n)}{n^s}$$  
where the coefficients $\lambda_{\pi}(n)\in\mathbb C$. In the special case of the group SL($n,\mathbb Z$) the Godement-Jacquet L-functions can be studied classically in terms of Maass forms on the quotient space
$\text{\rm SL}(n, \mathbb Z)\backslash \mathfrak h^n$
where $$\mathfrak h^n := \text{\rm GL}(n, \mathbb R)/\left(\text{\rm O}(n, \mathbb R)\cdot \mathbb R^\times\right)$$ is a generalization of the classical upper half-plane. In fact $\mathfrak h^2 := 
\left\{\begin{pmatrix} y&x\\0&1\end{pmatrix}\bigg\vert \; y>0, \, x\in\mathbb R \right\}$
is isomorphic to the classical upper half-plane.

\vskip 5pt
For $n\ge 2$, Maass forms are smooth functions $\phi: \mathfrak h^n\to \mathbb C$
which are automorphic for $\text{\rm SL}(n, \mathbb Z)$ with moderate growth and which are joint eigenfunctions of the full ring of invariant differential operators on $\text{\rm GL}(n, \mathbb R)$ as well as joint eigenfunctions of the  Hecke algebra. The Fourier expansion of Maass forms on GL$(n)$ were obtained for the first time by Piatetski-Shapiro \cite{PS1975} and then
 by Shalika \cite{Sh1973}, \cite{Sh1974}   independently.  

\vskip 5pt
A classical version of the Fourier coefficients of Maass forms on SL$(n,\mathbb Z)$ was announced by Jacquet \cite{J1981} at the Tata Institute  1979 conference on Automorphic Forms, Representation Theory and Arithmetic. In his book  Bump \cite{B1984} explicitly worked out Jacquet's classical approach for GL$(3,\mathbb R)$.
The more general case of GL$(n,\mathbb R)$ was first presented in Goldfeld's book \cite{G2006}.

\vskip 5pt
 A Maass form  $\phi$ for SL($n,\mathbb Z$) has a Fourier expansion (see \cite{G2006}, Theorem 9.3.11)
$$\boxed{\phi(g) =  \sum_{\gamma \,\in\, U_{n-1}(\mathbb Z)\backslash SL(n-1, \mathbb Z)}\;
   \sum_{m_1=1}^\infty  \cdots\sum_{m_{n-2}=1}^\infty \;\, \sum_{m_{n-1}\ne 0}\;\frac{A_\phi\big(m_1,  \ldots, m_{n-1}\big)}{\prod\limits_{k=1}^{n-1} \left|m_k\right|^{\frac{k(n-k)}{2}}}
      \cdot W\Big(M\left(\begin{smallmatrix} \gamma&\\&1\end{smallmatrix}\right)\cdot g\Big)}
     $$
     where $g\in\mathfrak h^n,$
     $M=\left(\begin{smallmatrix}
      m_1\cdots m_{n-2}|m_{n-1}| & & & \\
     &  \hskip -45pt\ddots & & \\ 
         & &\hskip -10pt m_1 & \\  & & &1\end{smallmatrix}\right)$, and $W:\mathfrak h^n\to\mathbb C$  
 is a Whittaker function.    Associated to $\phi$ we have arithmetic Fourier coefficients 
$$A_\phi(m_1,m_2,\ldots,m_{n-1}) \in\mathbb C,$$
where $m_1,m_2,\ldots,m_{n-2}\in \mathbb Z_{\ge1}$ while $m_{n-1}$ is a nonzero integer.   
\vskip 5pt

It is shown in  \cite[ Proposition 9.2.6]{G2006} that every Maass form is either even or odd according to whether 
$A_\phi(m_1,\ldots,m_{n-1}) = \pm A_\phi(m_1,\ldots, -m_{n-1}).$ We assume $\phi$ is normalized so that $A_\phi(1,\ldots,1)=1.$
\vskip 5pt
It is further  shown in   \cite[ Theorem 9.3.11]{G2006} that for each positive integer $m$ there is a Hecke operator $T_m$ acting on the complex vector space of Maass forms of SL$(n,\mathbb Z)$ where
$$T_m \phi(g) = A_\phi(m,1,\ldots,1)\cdot\phi(g), \qquad (g\in \mathfrak h^n).$$
for every Maass form $\phi$. Furthermore $T_{mm'} = T_m T_{m'}$ if $m$ and $m'$ are coprime, i.e. the Hecke operators are multiplicative. 

\begin{definition}{\bf (The Hecke algebras $\mathcal H, \mathcal H_p$ and sets of eigenvalues $\mathcal H^*$, $\mathcal H_p^*$)}
\label{HeckeAlgebra} Fix an integer $n\ge 2$. Let $\mathcal H$ denote the (integral) Hecke algebra which is the commutative polynomial ring over $\Z$ generated by the Hecke operators $T_1,T_2,T_3,\ldots$ acting on automorphic forms for $\mathrm{SL}(n,\mathbb Z)$, i.e., 
$$\mathcal H := \mathbb Z\big[T_1,T_2,T_3,\ldots\big].$$
 We also define $\mathcal H^*$ to be the set of eigenvalues of the Hecke operators $\mathcal H$ acting on $\phi$ where $A_\phi(m,1,\ldots,1)\in \mathcal{H}^*$ for every positive integer $m$.  For a fixed prime $p$  define $\mathcal{H}_p$ to be the subalgebra of $\mathcal{H}$ generated by the Hecke operators $T_{p^k}$  with $k\geq 0$.  Let $\mathcal{H}_p^*$ denote the set of eigenvalues of the Hecke operators in $\mathcal H_p.$
\end{definition}

\vskip 5pt
  Curiously, the Godement-Jacquet L-function $L(s,\phi)$ associated to a Hecke-Maass form $\phi$ is only built up with the eigenvalues of the Hecke operators $A_\phi(m,1,\ldots,1)$ with positive integers $m$ and is defined as
  $$L(s,\phi) := \sum_{m=1}^\infty \frac{A_\phi(m,1,\ldots,1)}{m^s}.$$
Remarkably $L(s,\phi)$
 has an Euler product given by (see \cite{G2006}, Definition 9.4.3)
 
 \small{$$\begin{aligned} & L(s, \phi) = \prod_p \Bigg(1 - \frac{A_{\phi}(p,1,\ldots,1)}{p^{s}}
   + \frac{A_{\phi}(1,p,1,\ldots,1)}{p^{2s}}
  \;\cdots  \; + \;(-1)^{n-1} \frac{A_{\phi}(1,,\ldots,1,p)}{p^{(n-1)s}} +\frac{(-1)^n}{p^{ns}}\Bigg)^{-1}.\end{aligned}$$}
 
\vskip 5pt
The main aim of this paper is to show that the general Fourier coefficients $A_\phi(m_1,m_2,\ldots,m_{n-1})$ are all eigenvalues of elements in the  Hecke algebra and  satisfy the multiplicativity relations
$$\boxed{\phantom{\int} A_\phi\big(m_1m_1',\;m_2m_2', \;\ldots\; m_{n-1}m_{n-1}'\big) = A_\phi\big(m_1,m_2,\ldots,m_{n-1})\cdot A_\phi(m_1',m_2',\ldots,m_{n-1}'\big) \phantom{\int}}$$
provided the products $\prod_{i=1}^{n-1} m_i$ and $\prod_{i=1}^{n-1} m_i'$ are relatively prime to each other. This multiplicativity result is stated in (cf.  \cite[ Theorem 9.3.11]{G2006}) but there is no proof given. Although this is a very well known result to experts, we were unable to find a proof anywhere else in the literature, so this paper fills a possible gap.

\vskip 5pt
\noindent
\underline{\large \bf The main results of this paper:}
 The proof that  $A_\phi(m_1,\ldots,m_{n-1}) \in \mathcal H^*$ is given in (\ref{r-Induction}). The fact that the Fourier coefficients  $A_\phi(m_1,\ldots,m_{n-1})$ are multiplicative is   in the proof of Theorem \ref{MainTheorem}. In sections 3,4 we present some explicit examples of constructing Hecke operators whose eigenvalues are not of the form $A_\phi(m,1,\ldots,1).$ We also remark that all these results can  be proved for Eisenstein series and  residues of Eisenstein series for SL$(n,\mathbb Z)$ with proofs that are essentially the same
as the ones we give for Maass forms.
\section{\large\bf Proof of Multiplicativity}

\begin{definition} Fix an integer $n\ge 2$ and a prime $p$. For $M=(m_1,m_2,\ldots,m_{n-1})\in \mathbb Z^{n-1}$, let  $A_\phi(M)$  denote the $M^{th}$ Fourier coefficient of a Maass form for $\mathrm{SL}(n,\mathbb Z).$  For a positive integer $1\le r\le n-1$ let $K_0,K_1,K_2,\ldots, K_r\in\mathbb Z_{\ge 0}$; assume that $K_0\ge K_1+K_2+\cdots+K_r.$ We define
$$\boxed{\phantom{\Big|} \mathcal A_p(K_0,K_1,\ldots,K_r) := A_\phi\big(p^{K_0},1,\ldots,1)\cdot A_\phi(p^{K_1},\;p^{K_2},\; \ldots, p^{K_r},\; 1,\ldots,1\big).}$$
\end{definition}

\vskip 10pt
We begin with the following lemma which is a key idea in the proof
 of multiplicativity of the Fourier coefficients $A_\phi(M)$.
\begin{lemma} \label{FirstLemma}  Let $p$ be a  prime. Fix  integers $n\ge 2$, $1\le r\le n-1$,  and $K_0,K_1,K_2,\ldots, K_r\in\mathbb Z_{\ge 0}$, with $K_0\ge K_1+K_2+\cdots+K_r$. Then
\small{\begin{align*}\label{eq:lemma}
\mathcal A_p(K_0,K_1,\ldots,K_r)  = \sum_{k_1=0}^{K_1}  \cdots  \sum_{k_r=0}^{K_r} A_\phi\Big(p^{L}, p^{K_2+k_1-k_2},p^{K_3+k_2-k_3},\;\ldots\;, p^{K_{r}+k_{r-1}-k_r},\; p^{k_r},1,\ldots,1  \Big),
\end{align*}}
where
$L = K_0+K_1-2k_1-k_2\;\,\cdots\; -k_r.$
\end{lemma}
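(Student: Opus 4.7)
Since $\phi$ is a Hecke eigenform with $T_{p^{K_0}}\phi=A_\phi(p^{K_0},1,\ldots,1)\,\phi$, reading off the $(p^{K_1},\ldots,p^{K_r},1,\ldots,1)$-th Fourier coefficient on both sides shows that $\mathcal A_p(K_0,K_1,\ldots,K_r)$ equals the corresponding Fourier coefficient of $T_{p^{K_0}}\phi$. The plan is therefore to compute this same Fourier coefficient a second way, using the explicit coset decomposition of $T_{p^{K_0}}$, and to identify the answer with the sum on the right-hand side of the lemma.

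The key input is the standard description of $T_{p^{K_0}}$ as (a normalizing scalar times) a sum $\phi\mapsto\sum_\gamma\phi(\gamma\,\cdot)$ over a system of upper-triangular coset representatives $\gamma$ with diagonal $(p^{a_1},\ldots,p^{a_n})$ where $a_i\ge 0$ and $\sum a_i=K_0$, with superdiagonal entries drawn from prescribed complete residue systems modulo appropriate powers of $p$. Substituting the Fourier expansion of $\phi$ from the boxed formula in the introduction and extracting the Fourier mode indexed by $(p^{K_1},\ldots,p^{K_r},1,\ldots,1)$, the sums over superdiagonal entries of $\gamma$ collapse by orthogonality of additive characters to a sum of Fourier coefficients $A_\phi(M')$, in which the indices $M'$ and the allowable diagonal data $(a_1,\ldots,a_n)$ are constrained by the surviving compatibility conditions.

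A direct combinatorial matching then shows that the surviving contributions are parametrized precisely by an $r$-tuple $(k_1,\ldots,k_r)$ with $0\le k_i\le K_i$, and that the index contributed by a given tuple is $M'=(p^L,\,p^{K_2+k_1-k_2},\ldots,p^{K_r+k_{r-1}-k_r},\,p^{k_r},\,1,\ldots,1)$ with $L=K_0+K_1-2k_1-\cdots-k_r$. The hypothesis $K_0\ge K_1+K_2+\cdots+K_r$ is exactly what ensures $L\ge 0$ throughout the sum, so every term on the right-hand side is an honest Fourier coefficient. Reassembling the computation thus produces exactly the claimed formula for $\mathcal A_p(K_0,K_1,\ldots,K_r)$.

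The main obstacle is the bookkeeping in the previous step: one must correctly enumerate the coset representatives, carry out the character orthogonality that collapses the sums over superdiagonal entries, and verify that the resulting parameterization by $(k_1,\ldots,k_r)$ produces precisely the shifted exponents displayed in the lemma. This matching is cleanest carried out inductively along the diagonal slots of $\gamma$, with each $k_j$ recording how much of the total $p^{K_0}$-mass is absorbed into the $(j{+}1)$-st Fourier slot. A useful sanity check along the way is that the sum $L+(K_2+k_1-k_2)+\cdots+(K_r+k_{r-1}-k_r)+k_r$ of the new exponents is $K_0+K_1+\cdots+K_r-(k_1+\cdots+k_r)$, matching what the coset calculation predicts for the total $p$-content after the orthogonality step.
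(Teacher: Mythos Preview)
Your approach is correct in principle but takes a genuinely longer route than the paper's proof. The paper simply invokes the standard Hecke relation (cf.\ \cite[p.~277]{G2006})
\[
A_\phi(m,1,\ldots,1)\,A_\phi(m_1,\ldots,m_{n-1}) \;=\; \sum_{\substack{c_1c_2\cdots c_{n}=m\\ c_i\mid m_i\ (1\le i\le n-1)}} A_\phi\!\left(\tfrac{m_1c_{n}}{c_1},\,\tfrac{m_2c_1}{c_2},\,\ldots,\,\tfrac{m_{n-1}c_{n-2}}{c_{n-1}}\right)
\]
as a black box and specializes it: setting $m=p^{K_0}$, $m_i=p^{K_i}$ for $1\le i\le r$, and $m_i=1$ for $r<i\le n-1$, the divisibility constraints force $c_i=p^{k_i}$ with $0\le k_i\le K_i$ for $i\le r$, $c_i=1$ for $r<i\le n-1$, and $c_n=p^{K_0-k_1-\cdots-k_r}$. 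Reading off the exponents gives the lemma in one line.

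What you are proposing---expanding $T_{p^{K_0}}\phi$ via upper-triangular coset representatives, inserting the Fourier expansion of $\phi$, and collapsing the superdiagonal sums by additive-character orthogonality---is exactly the computation that \emph{proves} the boxed identity above in the first place. So your plan amounts to rederiving that general Hecke relation in the special case at hand, rather than citing it. This buys you a self-contained argument from first principles, at the cost of a substantial amount of bookkeeping that the paper avoids entirely by quoting the relation as known. If you already have the identity available, the paper's two-line specialization is clearly preferable; your route would be appropriate in a setting where the identity is not assumed.
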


\begin{proof}
The proof of Lemma (\ref{FirstLemma}) is based on the following identity (cf.  \cite[p. 277]{G2006}). 

\small{\begin{equation}\label{A-formula}
\boxed{A_\phi(m,1,\ldots,1)A_\phi(m_1,m_2,\ldots,m_{n-1}) = \sum_{\substack{c_1c_2\cdots c_{n} = m \\ c_i\mid m_i\ (1\leq i\leq n-1)}} A_\phi\left( \frac{m_1c_{n}}{c_1},\frac{m_2c_1}{c_2},\frac{m_3c_2}{c_3},\ldots,\frac{m_{n-1}c_{n-2}}{c_{n-1}} \right).}
\end{equation}}
It follows from \eqref{A-formula} that 
 \begin{align*}
\mathcal{A}_p(K_0,K_1,\ldots,K_r) & = \sum_{\substack{(c_1c_2\cdots c_r) c_{n} = p^{K_0} \\ c_i\mid p^{K_i}\ (1\leq i\leq r)}} A_\phi\left( \frac{p^{K_1}c_{n}}{c_1},\frac{p^{K_2}c_1}{c_2},\cdots,\frac{p^{K_r}c_{r-1}}{c_r},c_r,1,\ldots,1 \right) \\
& = \sum_{k_1=0}^{K_1}  \cdots  \sum_{k_r=0}^{K_r} A_\phi\Big(p^{L}, p^{K_2+k_1-k_2},p^{K_3+k_2-k_3},\;\ldots\;, p^{K_{r}+k_{r-1}-k_r},\; p^{k_r},1,\ldots,1  \Big).
\end{align*}
We use here that the sum over $c_i$'s in the first line above is equivalent to the sum over $k_i$'s in the second since $c_i\mid p^{K_i}$ implies we can take $c_i=p^{k_i}$ with $0\leq k_i\leq K_i$.  Also, $c_n = p^{K_0}/p^{k_1+k_2+\cdots+k_r}$, which multiplied by $\frac{m_1}{c_1}=p^{K_1-k_1}$ gives $p^L$ with $L$ as claimed.  
\end{proof}

\vskip 10pt
Fix a prime $p$. Next we prove that every Fourier coefficient of a Maass form $\phi$ for $\mathrm{SL}(n,\mathbb Z)$ of the form $A\big(p^{K_1}, p^{K_2},\ldots,p^{K_{n-1}}\big)$ is an eigenfunction of an element in the Hecke algebra $\mathcal H_p$ as defined in Definition \ref{HeckeAlgebra}. 
\vskip 10pt
\begin{proposition} \label{HeckeEigenvalue} Let $K_1,K_2,\ldots,K_{n-1}\in\mathbb Z_{\ge0}.$ Then $A_\phi\big(p^{K_1}, p^{K_2},\ldots,p^{K_{n-1}}\big) \in \mathcal H_p^*.$
\end{proposition}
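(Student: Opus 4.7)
The plan is to induct on $r \in \{1, 2, \ldots, n-1\}$ the statement
\[
P(r): \quad A_\phi\bigl(p^{N_1}, p^{N_2}, \ldots, p^{N_r}, 1, \ldots, 1\bigr) \in \mathcal{H}_p^*  \quad \text{for all } N_1, \ldots, N_r \in \mathbb{Z}_{\ge 0},
\]
from which the proposition ($P(n-1)$) follows. The base case $P(1)$ holds because $A_\phi(p^{N_1}, 1, \ldots, 1)$ is by definition the eigenvalue of $T_{p^{N_1}} \in \mathcal{H}_p$ acting on $\phi$. A structural fact used throughout is that $\mathcal{H}_p^*$ is a subring of $\mathbb{C}$: since $\mathcal{H}_p$ is commutative, the evaluation $T \mapsto \lambda_T(\phi)$ is a ring homomorphism whose image is exactly $\mathcal{H}_p^*$. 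In particular, $\mathcal{H}_p^*$ is closed under sums, differences, and products.

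For the inductive step $P(r-1) \Rightarrow P(r)$ (with $r \ge 2$), I carry out a secondary induction on the tuple $(N_r, N_{r-1}, \ldots, N_1)$ in lexicographic order, read left-to-right. The inner base case $N_r = 0$ is immediate from $P(r-1)$. For the inner step, apply Lemma \ref{FirstLemma} with its parameter $r$ replaced by $r-1$ and the specific choices
\[
K_0 := N_1 + N_2 + \cdots + N_r, \qquad K_i := N_{i+1} \quad (1 \le i \le r-1);
\]
the hypothesis $K_0 \ge K_1 + \cdots + K_{r-1}$ reduces to the harmless inequality $N_1 \ge 0$. The left-hand side of the lemma then reads
\[
A_\phi\bigl(p^{N_1 + N_2 + \cdots + N_r},\, 1, \ldots, 1\bigr) \cdot A_\phi\bigl(p^{N_2}, p^{N_3}, \ldots, p^{N_r}, 1, \ldots, 1\bigr),
\]
which lies in $\mathcal{H}_p^*$: the first factor is a Hecke eigenvalue, and the second lies in $\mathcal{H}_p^*$ by the outer hypothesis $P(r-1)$.

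Direct substitution shows that the summand at the extremal index $(k_1, \ldots, k_{r-1}) = (N_2, N_3, \ldots, N_r)$ is exactly the target coefficient $A_\phi(p^{N_1}, p^{N_2}, \ldots, p^{N_r}, 1, \ldots, 1)$. For any other multi-index, let $j^*$ be the largest index with $k_{j^*} < N_{j^*+1}$; the ``max chain'' $k_i = N_{i+1}$ for all $i > j^*$, together with the form of the lemma's summands, forces $M_\ell = N_\ell$ for $\ell \in \{j^*+2, \ldots, r\}$ and $M_{j^*+1} = k_{j^*} < N_{j^*+1}$. Hence the tuple $(M_r, M_{r-1}, \ldots, M_1)$ is strictly smaller than $(N_r, N_{r-1}, \ldots, N_1)$ in the chosen lex order, so the inner induction hypothesis places every such ``other'' summand in $\mathcal{H}_p^*$. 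Rearranging within the ring $\mathcal{H}_p^*$ now yields the target coefficient, closing both inductions. The main obstacle is this lex-ordering claim: the parameters $K_0 = \sum N_i$ and $K_i = N_{i+1}$ have been tuned precisely so that the target appears exactly once among the summands, and every competing summand is strictly lex-smaller in $(M_r, \ldots, M_1)$.
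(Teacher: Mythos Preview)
Your proof is correct. Both you and the paper induct on $r$, use Lemma \ref{FirstLemma} as the key input, and rely on the fact that $\mathcal H_p^*$ is a subring of $\mathbb C$; where you diverge is in how the target coefficient is isolated from the sum that the lemma produces.

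The paper applies Lemma \ref{FirstLemma} at level $\mathfrak r$ (so the summands already live at level $\mathfrak r+1$) and then performs a sequence of explicit finite differences: the shift $K_0\to K_0+1,\ K_1\to K_1-1$ leaves every summand unchanged but shortens the $k_1$-range by one, so subtracting kills the $k_1$-sum; iterating peels off the remaining $k_i$-sums one at a time until a single term survives. This yields, in effect, a closed formula expressing the target as a signed sum of products $\mathcal A_p(\cdots)$, hence an explicit element of $\mathcal H_p$ whose eigenvalue is $A_\phi(p^{K_1},\ldots,p^{K_r},1,\ldots,1)$.

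You instead apply the lemma at level $r-1$ with the tuned choice $K_0=\sum N_i$, $K_i=N_{i+1}$, so that the target appears as the extremal summand, and then dispose of the remaining summands by a secondary strong induction on $(N_r,\ldots,N_1)$ in lex order. Your verification that every non-extremal summand is strictly lex-smaller is correct (crucially using $k_i\le K_i=N_{i+1}$, so the first place where $k_{j^*}\ne N_{j^*+1}$ forces $M_{j^*+1}=k_{j^*}<N_{j^*+1}$), and all exponents stay nonnegative since $M_\ell=N_{\ell+1}+k_{\ell-1}-k_\ell\ge k_{\ell-1}\ge 0$ and $L\ge N_1\ge 0$. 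The trade-off: your argument is cleaner and avoids the parameter-shifting bookkeeping, while the paper's telescoping is more constructive and directly exhibits the Hecke operator in question.
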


\begin{proof} We shall prove  that for every integer $1\le r \le n-1$  
\begin{equation}\label{r-Induction}
A_\phi\big(p^{K_1}, \ldots,p^{K_r},\,1,\ldots,1\big) \in \mathcal H_p^*.
\end{equation} It is obvious that (\ref{r-Induction})  holds for $r=1$ since $A_\phi(p^{K_1},1,\ldots,1)$ is an eigenvalue of $T_{p^{K_1}}.$ We complete the proof by using induction on $r$. Assume (\ref{r-Induction}) holds for every $r\le \frak r$ with $\frak r\ge 1.$ Then we want to prove that (\ref{r-Induction}) holds for $r=\frak r+1.$

Now it follows from Lemma (\ref{FirstLemma}) that,   if $K_0\ge K_1+K_2\cdots+K_r$, then

\small{\begin{align*}
\boxed{\mathcal A_p(K_0,K_1,\ldots,K_\frak r)  = \sum_{k_1=0}^{K_1}  \cdots  \sum_{k_\frak r=0}^{K_\frak r} A_\phi\Big(p^{L}, p^{K_2+k_1-k_2},p^{K_3+k_2-k_3},\;\ldots\;, p^{K_{\frak r}+k_{\frak r-1}-k_\frak r},\; p^{k_\frak r},1,\ldots,1  \Big)}
\end{align*}}
where
$L = K_0+K_1-2k_1-k_2\;\,\cdots\; -k_\frak r.$ It is clear by induction that $\mathcal A_p(K_0,K_1,\ldots,K_\frak r) \in\mathcal H_p^*.$ Note that if we make the simple change change of variables
$$K_0 \to K_0+1,\qquad\quad K_1\to K_1-1$$
then $A_\phi\Big(p^{L}, p^{K_2+k_1-k_2},p^{K_3+k_2-k_3},\;\ldots\;, p^{K_{\frak r}+k_{\frak r-1}-k_\frak r},\; p^{k_\frak r},1,\ldots,1  \Big)$ doesn't change at all. It follows that
\vskip-10pt

\begin{align}& \mathcal A_p(K_0,K_1,\ldots,K_\frak r) - \mathcal A_p(K_0+1,K_1-1,\ldots,K_\frak r)\label{FirstStep}\\
&
\hskip 10pt
= \sum_{k_2=0}^{K_2}  \cdots  \sum_{k_r=0}^{K_\frak r} A_\phi\Big(p^{K_0-K_1-k_2- \;\cdots\;  -k_\frak r}, p^{K_2+K_1-k_2},p^{K_3+k_2-k_3},\;\ldots\;, p^{K_{\frak r}+k_{\frak r-1}-k_\frak r},\; p^{k_\frak r},1,\ldots,1  \Big). \nonumber
\end{align}
Note that the entry $p^{k_\frak r}$ occurs in the ${(\frak r +1)}^{st}$ position.

\vskip 5pt
Next, it is clear that if we make the simple changes
$$K_0\to K_0+1, \qquad K_1\to K_1+1, \qquad K_2\to K_2-1$$
then $A_\phi\Big(p^{K_0-K_1-k_2- \;\cdots\;  -k_\frak r}, p^{K_2+K_1-k_2},p^{K_3+k_2-k_3},\;\ldots\;, p^{K_{\frak r}+k_{\frak r-1}-k_\frak r},\; p^{k_\frak r},1,\ldots,1  \Big)$ doesn't change.

\vskip 10pt
 It follows as before that
\begin{align*}
& \Big(\mathcal A_p(K_0,K_1,K_2, \ldots,K_\frak r) - \mathcal A_p(K_0+1,K_1-1,K_2,\ldots,K_\frak r)\Big)\\
&
\hskip 90pt -  \Big(\mathcal A_p(K_0+1,K_1+1,K_2-1,\ldots,K_\frak r) - \mathcal A_p(K_0+2,K_1,K_2-1,\ldots,K_\frak r)\Big)\\
&
\hskip 46pt
= \sum_{k_3=0}^{K_3}  \cdots  \sum_{k_r=0}^{K_\frak r} A_\phi\Big(p^{K_0-K_1-K_2-k_3- \;\cdots\;  -k_\frak r},\; p^{K_1},  p^{K_3+K_2-k_3},\;\ldots\;, p^{K_{\frak r}+k_{\frak r-1}-k_\frak r},\; p^{k_\frak r},1,\ldots,1  \Big).
\end{align*}
This process can be continued inductively until we reach the final conclusion that
$$A_\phi\Big(p^{K_0-K_1-K_2-K_3- \;\cdots\;  -K_\frak r},\; p^{K_1},  p^{K_2}, p^{K_3},\;\ldots\;,
 p^{K_{\frak r-1}},\; p^{K_\frak r},1,\ldots,1  \Big)\in \mathcal H_p^*.$$
Here we simply choose $K_0$ sufficiently large which allows us to prove this result for every non-negative power of $p$ in the first position of the Fourier coefficient $A_\phi$ above.
\end{proof}

\vskip 10pt
It remains to prove our main theorem. 

\begin{theorem}{\bf (Multiplicativity of Fourier Coefficients)}\label{MainTheorem}
 Let $n\ge 2$ and let $\phi$ be a Maass form for $\mathrm{SL}(n,\mathbb Z)$ with arithmetic Fourier coefficients $A_\phi(m_1,\ldots,m_{n-1})$, normalized so that $$A_\phi(1,1,\ldots,1)=1.$$ Then 
$$\boxed{\phantom{\int} A_\phi\big(m_1m_1',\;m_2m_2', \;\ldots\; m_{n-1}m_{n-1}'\big) = A_\phi\big(m_1,m_2,\ldots,m_{n-1})\cdot A_\phi(m_1',m_2',\ldots,m_{n-1}'\big) \phantom{\int}}$$
if ${\rm gcd}\left(\prod\limits_{i=1}^{n-1} m_i, \,\prod\limits_{i=1}^{n-1}m_i'\right) = 1.$
\end{theorem}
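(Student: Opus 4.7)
The plan is to reduce the theorem to a single-prime case and then rerun, in a twisted form, the entire argument of Proposition~\ref{HeckeEigenvalue}. First, a standard Chinese-Remainder-style induction on the number of distinct primes dividing $\prod_i m_i'$ reduces the theorem to the special case where $M' = (p^{K_1}, \ldots, p^{K_{n-1}})$ is a prime-supported tuple with $\gcd(p, \prod_i m_i) = 1$. Concretely, given $M$, $M'$ with coprime products, pick any prime $p \mid \prod m_i'$, split $M' = M_1' \cdot M_2'$ componentwise into its $p$-part $M_1'$ and its $p$-free part $M_2'$, and apply the single-prime case first to $(MM_2', M_1')$ and then, inductively, to $(M, M_2')$ and to $(M_2', M_1')$, each of which has strictly fewer primes in the second factor.

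For the single-prime case, fix $M$ coprime to $p$ and define
$$B(L_1, \ldots, L_{n-1}) \,:=\, A_\phi\bigl(m_1 p^{L_1},\, m_2 p^{L_2},\, \ldots,\, m_{n-1} p^{L_{n-1}}\bigr) \qquad (L_i \ge 0),$$
so that $B(0,\ldots,0) = A_\phi(M)$ and the target becomes $B(K_1, \ldots, K_{n-1}) = A_\phi(M) \cdot A_\phi(p^{K_1}, \ldots, p^{K_{n-1}})$. Write $y_{K_0} := A_\phi(p^{K_0}, 1, \ldots, 1)$. Applying~\eqref{A-formula} to the product $y_{K_0} \cdot B(K_1, \ldots, K_r, 0, \ldots, 0)$, the coprimality $\gcd(p, m_i) = 1$ forces every divisor $c_i$ (a power of $p$ dividing $m_i p^{L_i}$) to divide $p^{L_i}$; in particular $c_i = 1$ whenever $L_i = 0$. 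A direct unpacking then yields exactly the formula of Lemma~\ref{FirstLemma}, but with each $A_\phi(p^{\,\cdot}, \ldots)$ replaced by the corresponding $B(\,\cdot\,)$. Because the inversion procedure used to prove Proposition~\ref{HeckeEigenvalue}---the iterated alternating sums $\mathcal A_p(K_0, K_1, \ldots) - \mathcal A_p(K_0+1, K_1-1, \ldots)$ and their higher analogues---uses only $\mathbb Z$-linear combinations of the recursion and never the particular values of the $y_k$, the very same manipulations extract $B(K_1, \ldots, K_{n-1})$ as the same universal integer polynomial in the $y_k$, multiplied by the constant $B(0, \ldots, 0) = A_\phi(M)$. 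Since that polynomial evaluated at the $y_k$ is precisely $A_\phi(p^{K_1}, \ldots, p^{K_{n-1}})$, the desired multiplicativity follows.

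The main technical obstacle is the careful verification of this twisted Lemma~\ref{FirstLemma} identity for $B$: the coprimality $\gcd(p, \prod m_i) = 1$ must be used precisely to keep the $m_i$ factors as passive spectators in~\eqref{A-formula}, so that the divisor bookkeeping is combinatorially identical to the untwisted case. Everything downstream is then purely formal, because the alternating-difference extraction in Proposition~\ref{HeckeEigenvalue} depends only on the shape of the recursion, not on the values plugged in. The base case $r = 1$ of the parallel induction, namely $B(K_1, 0, \ldots, 0) = A_\phi(M) \cdot y_{K_1}$, is the familiar consequence of~\eqref{A-formula} that multiplying $A_\phi(M)$ by $A_\phi(p^{K_1}, 1, \ldots, 1)$ with $p$ coprime to $\prod m_i$ collapses to the single surviving term $A_\phi(m_1 p^{K_1}, m_2, \ldots, m_{n-1})$.
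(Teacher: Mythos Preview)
Your proposal is correct and follows essentially the same route as the paper. Your ``twisted Lemma~\ref{FirstLemma}'' is exactly the paper's Lemma~\ref{SecondLemma}, your function $B$ encodes the paper's target identity~\eqref{mult}, and your observation that the alternating-difference extraction is purely formal in the recursion is precisely how the paper completes the induction by rerunning the argument of Proposition~\ref{HeckeEigenvalue} verbatim with the $m_i$ inserted as passive spectators.
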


\begin{proof} The proof is based on a simple variant of Lemma (\ref{FirstLemma}) which we now present.

\begin{lemma}\label{SecondLemma} Let $p$ be a fixed prime where $p\nmid m_1m_2\cdots m_{n-1}.$ We have
\begin{align*}
& A_\phi\big(p^{K_0},1,\ldots,1\big)\cdot A_\phi\Big(p^{K_1}m_1,\;p^{K_2}m_2,\; \ldots, p^{K_r}m_r,\; m_{r+1},\ldots,m_{n-1}\Big)\\
&
\hskip 23pt
=  \sum_{k_1=0}^{K_1}  \cdots  \sum_{k_r=0}^{K_r} A_\phi\Big(p^{L}m_1,\; p^{K_2+k_1-k_2}m_2,\ldots\;, p^{K_{r}+k_{r-1}-k_r}m_r,\; p^{k_r}m_{r+1},m_{r+2}, \;\ldots,m_{n-1}  \Big)
\end{align*}
where $L = K_0+K_1-2k_1-k_2\;\,\cdots\; -k_r.$
\end{lemma}
\begin{proof} The proof is exactly the same as the proof of Lemma (\ref{FirstLemma}). In fact, since $p\nmid m_1m_2\cdots m_{n-1}$, it follows that the sum  in equation (\ref{A-formula}) which takes the form
$$ \sum_{\substack{c_1c_2\cdots c_{n} = p^{K_0} \\ c_i\mid m_i\ (1\leq i\leq n-1)}} $$
 tells us that each $c_i = p^{k_i}$ as before since all the $c_i$ have to divide $p^{K_0}$.
The proof immediately follows from equation (\ref{A-formula}).
\end{proof}

\vskip 10pt

\noindent
{\bf Completion of the proof of Theorem  \ref{MainTheorem}.}
 It is enough to prove that  if $p$ is a fixed prime where $p\nmid m_1m_2\cdots m_{n-1}$ then for every every $1\le r\le n-1$ and all $K_1,\ldots,K_r\in\mathbb Z_{\ge0}$ we have
 
\begin{equation}\label{mult}
A_\phi\left(p^{K_1}m_1,\,\ldots ,p^{K_r}m_r,\, m_{r+1},\cdots,m_{n-1}  \right) = A_\phi\left(p^{K_1},\,\ldots ,p^{K_r},\, 1,\cdots,1 \right) \cdot A_\phi\left(m_1,\,\ldots ,m_{n-1} \right).
\end{equation}

\vskip 5pt
 It follows easily from Lemma \ref{SecondLemma} that (\ref{mult}) holds for $r=1.$ We complete the proof of  (\ref{mult}) by using induction on $r$. Assume (\ref{mult}) holds for every $r\le \frak r$ with $\frak r\ge 1.$ Then we want to prove that (\ref{mult}) holds for $r=\frak r+1.$ Now it follows from Lemma \ref{SecondLemma} that

\begin{align}\label{NewIdentity1}
& A_\phi\big(p^{K_0},1,\ldots,1\big)\cdot A_\phi\Big(p^{K_1}m_1,\;p^{K_2}m_2,\; \ldots, p^{K_\frak r}m_\frak r,\; m_{\frak r+1},\ldots,m_{n-1}\Big)\\
&
\hskip 23pt
=  \sum_{k_1=0}^{K_1}  \cdots  \sum_{k_r=0}^{K_r} A_\phi\Big(p^{L}m_1,\; p^{K_2+k_1-k_2}m_2,\ldots\;, p^{K_{\frak r}+k_{\frak r-1}-k_\frak r}m_\frak r,\; p^{k_\frak r}m_{\frak r+1},m_{\frak r+2}, \;\ldots,m_{n-1}  \Big)\nonumber\end{align}
where $L = K_0+K_1-2k_1-k_2\;\,\cdots\; -k_r.$
If we make the change $K_0\to K_0+1, \; K_1\to K_1-1$ then the coefficient $A_\phi(*)$ on the right hand side of (\ref{NewIdentity1}) does not change at all. It follows that

\begin{align}\label{NewIdentity2}
& \Bigg(A_\phi\big(p^{K_0},1,\ldots,1\big)\cdot A_\phi\Big(p^{K_1}m_1,\;p^{K_2}m_2,\; \ldots, p^{K_\frak r}m_\frak r,\; m_{\frak r+1},\ldots,m_{n-1}\Big)\Bigg)\\
& 
\hskip 60pt - \Bigg(A_\phi\big(p^{K_0+1},1,\ldots,1\big)\cdot A_\phi\Big(p^{K_1-1}m_1,\;p^{K_2}m_2,\; \ldots, p^{K_\frak r}m_\frak r,\; m_{\frak r+1},\ldots,m_{n-1}\Big)\Bigg)\nonumber\\
&
\hskip 40pt
=  \sum_{k_2=0}^{K_2}  \cdots  \sum_{k_r=0}^{K_r} A_\phi\Big(p^{L}m_1,\; p^{K_2+K_1-k_2}m_2,\ldots\;, p^{K_{\frak r}+k_{\frak r-1}-k_\frak r}m_\frak r,\; p^{k_\frak r}m_{\frak r+1},m_{\frak r+2}, \;\ldots,m_{n-1}  \Big)\nonumber\end{align}
where $L = K_0-K_1-k_2\;\,\cdots\; -k_r.$
\vskip 5pt

Now note, that by the inductive hypothesis, the left side of (\ref{NewIdentity2}) can be written as
\begin{align*}
& \Bigg(A_\phi\big(p^{K_0},1,\ldots,1\big)\cdot A_\phi\Big(p^{K_1},\;p^{K_2},\; \ldots, p^{K_\frak r},\; 1,\ldots,1\Big)\\ 
&
\hskip 50pt- A_\phi\big(p^{K_0},1,\ldots,1\big)\cdot A_\phi\Big(p^{K_1},\;p^{K_2},\; \ldots, p^{K_\frak r},\; 1,\ldots,1\Big)\Bigg)\cdot A_\phi\big(m_1,\,\ldots ,m_{n-1} \big)
\end{align*}
The induction process can be continued in exactly the same way as the proof of Proposition \ref{HeckeEigenvalue} leading to the final result that

\begin{align*}
&A_\phi\Big(p^{L},\; p^{K_1},  p^{K_2}, p^{K_3},\;\ldots\;,
 p^{K_{\frak r-1}},\; p^{K_\frak r},1,\ldots,1  \Big)\cdot A_\phi\big(m_1,\,\ldots ,m_{n-1} \big) \\
 &
 \hskip 120pt
 = A_\phi\Big(p^{L}m_1,\;p^{K_1}m_2,\; \ldots, p^{K_{\frak r-1}}m_\frak r,\;  p^{K_{\frak r}}m_{\frak r+1},\ldots,m_{n-1}\Big)
 \end{align*}
 with $L= K_0-K_1-K_2-K_3- \;\cdots\;  -K_\frak r.$
\end{proof}

\begin{remark} If we assume all $m_i=1$ (for $i=1,\ldots,n-1$), the first inductive step (\ref{NewIdentity2}) in the above proof is exactly the same as the first inductive step (\ref{FirstStep}) in the proof of 
Proposition \ref{HeckeEigenvalue}. In fact all the inductive steps in the proof of 
the multiplicativity relation (\ref{mult}) will exactly match the inductive steps in the proof of Proposition \ref{HeckeEigenvalue} if all $m_i=1.$ When the $m_i$ are not all equal to 1 (since $p\nmid m_i$ for $i=1,\ldots,n-1$) there is really no change in the Hecke identity \ref{A-formula} except that
the $m_i$ are inserted in the  $i^{\rm th}$ place of the Fourier coefficient $A_\phi$. 
\end{remark}

\section{\large \bf The Example \boldmath $A_\phi(1,\ldots,1,p,1,\ldots,1)$}

 Recall that a {\it composition} of a positive integer $\ell$ is an ordered tuple $(i_1,i_2,\ldots, i_r)$ ($r\in\mathbb Z_{\ge1}$) of positive integers such that $$i_1+i_2+\cdots+i_r=\ell.$$
 
 We have:
 
 \begin{proposition}\label{ellth-place}For $n\ge 2$ and $ 1\le \ell\le n-1$, let $\mathcal C_\ell$ denote the set of compositions of $\ell$.  Then
\begin{equation}
A_\phi(1,\ldots,1,\hskip-6pt\underbrace{p}_{\ell^{\rm th}{\rm\, place}}\hskip-5pt,1,\ldots,1)=\sum_{(i_1,i_2,\ldots, i_r)\in C_\ell} \,\prod_{j=1}^r (-1)^{i_j+1}A_\phi( p^{i_j},1,\ldots,1).\label{ellth-formula}
\end{equation}
In other words, the  combination of Hecke operators whose eigenvalue equals the left hand side of \eqref{ellth-formula} is $\sum\limits_{(i_1,i_2,\ldots, i_r)\in C_\ell} \,\prod\limits_{j=1}^r (-1)^{i_j+1}T_{p^{i_j}}$.
   \end{proposition}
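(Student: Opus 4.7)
The plan is to deduce the identity as a formal-power-series computation from the Euler product factorization at $p$ recalled in the introduction. Setting $x=p^{-s}$, let
$$Q(x) := \sum_{k=0}^\infty A_\phi(p^k,1,\ldots,1)\, x^k, \qquad P(x) := L_p(s,\phi)^{-1}.$$
The Euler product asserts $Q(x)P(x)=1$ and identifies, for each $1\le \ell\le n-1$, the $x^\ell$-coefficient of $P(x)$ as $(-1)^\ell A_\phi(1,\ldots,1,p,1,\ldots,1)$ with $p$ in the $\ell^{\rm th}$ coordinate. The entire problem therefore reduces to reading off that coefficient from the reciprocal of $Q(x)$.

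To compute it, I would write $Q(x)=1+R(x)$ with $R(x):=\sum_{k\ge 1}A_\phi(p^k,1,\ldots,1)\,x^k$ and expand
$$P(x) \;=\; \frac{1}{1+R(x)} \;=\; \sum_{r=0}^\infty (-1)^r R(x)^r$$
as formal power series. Multinomial expansion gives $R(x)^r=\sum_{i_1,\ldots,i_r\ge 1}\prod_{j=1}^r A_\phi(p^{i_j},1,\ldots,1)\,x^{i_1+\cdots+i_r}$, so the $x^\ell$-coefficient of $R(x)^r$ is precisely the sum, over all compositions of $\ell$ of length $r$, of $\prod_j A_\phi(p^{i_j},1,\ldots,1)$. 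Summing on $r$ and matching coefficients with $P(x)$ produces, for each $1\le\ell\le n-1$, the intermediate identity
$$(-1)^\ell A_\phi\bigl(1,\ldots,1,\underbrace{p}_{\ell^{\rm th}},1,\ldots,1\bigr) \;=\; \sum_{(i_1,\ldots,i_r)\in C_\ell} (-1)^r \prod_{j=1}^r A_\phi(p^{i_j},1,\ldots,1).$$

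What remains is a one-line sign check: for a composition of $\ell$ of length $r$, one has $\sum_{j=1}^r(i_j+1)=\ell+r$, so $(-1)^{\ell+r}=\prod_{j=1}^r(-1)^{i_j+1}$. Multiplying the displayed identity by $(-1)^\ell$ (and using $(-1)^{2\ell}=1$) converts it into \eqref{ellth-formula}. The Hecke-operator assertion is then immediate: since $T_{p^i}\phi = A_\phi(p^i,1,\ldots,1)\phi$ and $\mathcal H_p$ is commutative, the combination $\sum_{(i_1,\ldots,i_r)\in C_\ell}\prod_{j=1}^r(-1)^{i_j+1} T_{p^{i_j}}$ acts on $\phi$ with eigenvalue equal to the right-hand side of \eqref{ellth-formula}. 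I do not anticipate any real obstacle: the argument is purely formal once the Euler product is taken as input, and the only thing requiring care is the sign bookkeeping in the final step.
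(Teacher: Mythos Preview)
Your argument is correct, but it takes a genuinely different route from the paper. The paper works entirely from the basic Hecke relation \eqref{A-formula}: it first establishes the two-term recursion $A_{j,0}(p)A_{0,k}(p)=A_{j,k}(p)+A_{j-1,k+1}(p)$ (Lemma~\ref{steplemma}), iterates it to obtain the recurrence
\[
A_\phi(1,\ldots,1,\underbrace{p}_{\ell},1,\ldots,1)=\sum_{m=1}^\ell (-1)^{m+1}A_\phi(p^m,1,\ldots,1)\,A_\phi(1,\ldots,1,\underbrace{p}_{\ell-m},1,\ldots,1)
\]
(Lemma~\ref{ell-recur}), and then finishes by strong induction on $\ell$. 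You instead take the explicit Euler product from the introduction as a black box and read off the $x^\ell$-coefficient of $L_p(s,\phi)^{-1}$ by formally inverting $Q(x)=1+R(x)$.

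Your approach is shorter and conceptually transparent---once the Euler product is granted, the proposition is literally the statement that the coefficients of $1/Q(x)$ are signed sums over compositions. The paper's approach, by contrast, uses only the elementary identity \eqref{A-formula} and is therefore more self-contained; indeed, its recurrence in Lemma~\ref{ell-recur} is exactly the relation $P(x)Q(x)=1$ read coefficientwise, so in effect the paper is \emph{re-deriving} that piece of the Euler product rather than assuming it. The one caveat with your route is potential circularity: if the cited derivation of the Euler product (with those particular coefficients $A_\phi(1,\ldots,1,p,1,\ldots,1)$) itself rests on identities equivalent to what you are proving, then your argument is not independent. If the Euler product is established by other means (e.g.\ via Satake parameters), your proof is a perfectly valid shortcut.
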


To prove this proposition, we need two lemmas, for which, in turn, we introduce some notation.

\begin{definition}\rm Let $n\ge2$; let $p$ be prime; let $j\in \mathbb Z_{\ge0}$ and $0\le k\le n$.  We define\begin{align*} A_{j,k}(p)=\begin{cases} A_\phi(p^j,1,\ldots,1 )&\text{ if $k=0$ or $k=n$;}\\
 A_\phi(p^{j+1},1,\ldots,1 )&\text{ $k=1$;}\\
 A_\phi(p^j,1,\ldots,1,\hskip-6pt\underbrace{p}_{k^{\rm th}{\rm\, place}}\hskip-5pt,1,\ldots,1)&\text{ if $2\le k\le n-1$.}\end{cases}\end{align*}
 \end{definition}
 That is,  $A_{j,k}(p)$ is the result of multiplying the $k$th coordinate in the argument of  $A_\phi(p^j,1,\ldots,1 )$ by $p$,  where the cases $k=0$ and $k=n$ correspond to no extra factor of $p$.  
 
We have
\begin{lemma}\label{steplemma}  For $n\ge2$, $p$ prime, $j\in\mathbb Z_{\ge1},$ and $1\le k\le n-1$, 
\begin{equation} A_{j,0}(p)A_{0,k}(p)= A_{j,k}(p)+A_{j-1,k+1}(p).\label{jk-recur}\end{equation}
 \end{lemma}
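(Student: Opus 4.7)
The plan is to apply the Hecke multiplication identity (\ref{A-formula}) directly, with $m=p^j$ and $(m_1,\ldots,m_{n-1})$ taken to be the argument of $A_{0,k}(p)$ -- namely $(p,1,\ldots,1)$ if $k=1$, and $(1,\ldots,1,p,1,\ldots,1)$ with $p$ in position $k$ if $2\le k\le n-1$ -- and then to enumerate the very short list of divisor tuples $(c_1,\ldots,c_n)$ that survive the constraints.

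First I would treat the generic case $2\le k\le n-1$. The divisibility conditions $c_i\mid m_i$ force $c_i=1$ for every $i\ne k$, while $c_k\in\{1,p\}$; the remaining factor in $c_1c_2\cdots c_n=p^j$ is then absorbed by $c_n$, so the sum in (\ref{A-formula}) collapses to exactly two terms. The choice $c_k=1$, $c_n=p^j$ yields $m_1c_n/c_1=p^j$ in position $1$, $m_kc_{k-1}/c_k=p$ in position $k$, and $1$ everywhere else, producing $A_{j,k}(p)$. The choice $c_k=p$, $c_n=p^{j-1}$ yields $p^{j-1}$ in position $1$, $m_kc_{k-1}/c_k=1$ in position $k$, $m_{k+1}c_k/c_{k+1}=p$ in position $k+1$, and $1$ elsewhere, producing $A_{j-1,k+1}(p)$. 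The boundary case $k=n-1$ is absorbed by the convention $A_{j-1,n}(p)=A_{j-1,0}(p)=A_\phi(p^{j-1},1,\ldots,1)$, since position $n$ does not appear in the argument of $A_\phi$.

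The case $k=1$ is handled in the same spirit: the tuple $(p,1,\ldots,1)$ forces $c_i=1$ for $2\le i\le n-1$, with $c_1\in\{1,p\}$ and $c_n=p^j/c_1$. The choice $c_1=1$ gives $A_\phi(p^{j+1},1,\ldots,1)=A_{j,1}(p)$, while the choice $c_1=p$ gives $A_\phi(p^{j-1},p,1,\ldots,1)=A_{j-1,2}(p)$, so (\ref{jk-recur}) holds at $k=1$ as well.

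There is essentially no obstacle here beyond bookkeeping: the identity (\ref{A-formula}) does all of the algebraic work, because the sparsity of the target tuple $(1,\ldots,1,p,1,\ldots,1)$ leaves only two divisors to choose. The only care needed is to verify that the case distinctions built into the definition of $A_{j,k}(p)$ at $k=0$, $k=1$, and $k=n$ line up with these two divisor choices, so that (\ref{jk-recur}) is uniform in $k$; this is immediate from the boundary conventions noted above.
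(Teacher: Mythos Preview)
Your argument is correct and follows essentially the same route as the paper: apply \eqref{A-formula} with $m=p^j$ and $(m_1,\ldots,m_{n-1})$ having a single $p$ in position $k$, observe that the constraints $c_i\mid m_i$ force all $c_i=1$ except $c_k\in\{1,p\}$ and $c_n=p^j/c_k$, and read off the two resulting summands as $A_{j,k}(p)$ and $A_{j-1,k+1}(p)$. Your explicit treatment of the endpoints $k=1$ and $k=n-1$ is a bit more detailed than the paper's version, which folds these into the general case via the definition of $A_{j,k}(p)$, but the substance is identical.
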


\begin{proof} Into \eqref{A-formula}, we put$$ m=p^j,\quad m_k=p,\quad    \quad m_i=1\  (1\le i\le n-1,\ i\ne k),$$so that the  left hand side of \eqref{A-formula} equals $A_{j,0}(p)A_{0,k}(p) $. Then the conditions$$c_1c_2\cdots c_n=m;\quad  c_i|m_i\ (1\le i\le n-1)$$imply that the sum in \eqref{A-formula} entails two summands:  either when $c_n=p^j$ and  $c_i=1$ for $1\le i\le n-1$; or when $c_n=p^{j-1}$, $c_k=p$,  and $c_i=1$ for $1\le i\le n-1$ and $i\ne k$.  In the first case, the corresponding summand equals  $A_{j,k}(p)$; in the second case, this summand equals $A_{j-1,k+1}(p)$. From this, \eqref{jk-recur} follows. \end{proof}

A consequence of the above lemma is the following.

\begin{lemma} \label{ell-recur}For $2\le \ell\le n-1$, we have
\begin{equation}A_\phi(1,\ldots,1,\hskip-6pt\underbrace{p}_{\ell^{\rm th}{\rm\, place}}\hskip-5pt,1,\ldots,1)=\sum_{m=1}^\ell (-1)^{m+1} A_\phi(p^m,1,\ldots,1)
A_\phi(1,\ldots,1,\hskip-16pt\underbrace{p}_{(\ell-m)^{\rm th}{\;\rm place}}\hskip-15pt,1,\ldots,1),\label{recur}
\end{equation}
with the understanding that, when $\ell=m$,
$$A_\phi(1,\ldots,1,\hskip-16pt\underbrace{p}_{(\ell-m)^{\rm th}{\;\rm place}}\hskip-15pt,1,\ldots,1)$$simply  denotes the constant 1.
\end{lemma}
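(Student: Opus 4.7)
The plan is to prove Lemma \ref{ell-recur} by iterating Lemma \ref{steplemma} and collecting a telescoping sum. To set up the iteration, I rewrite the identity of Lemma \ref{steplemma} in the form
$$A_{m,\ell-m}(p) \;=\; A_{m+1,0}(p)\, A_{0,\ell-m-1}(p) \;-\; A_{m+1,\ell-m-1}(p),$$
which is just Lemma \ref{steplemma} with $j$ replaced by $m+1$ and $k$ by $\ell-m-1$. This substitution is legal whenever $0 \le m \le \ell-2$, since then $1 \le \ell-m-1 \le \ell-1 \le n-2$, which lies in the allowed range $1 \le k \le n-1$.

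Using this rearrangement I would prove, by a short induction on $m$, the interpolating identity
$$A_{0,\ell}(p) \;=\; \sum_{i=1}^{m} (-1)^{i+1}\, A_{i,0}(p)\, A_{0,\ell-i}(p) \;+\; (-1)^{m}\, A_{m,\ell-m}(p), \qquad 0 \le m \le \ell-1.$$
The base $m=0$ is tautological, and the step from $m$ to $m+1$ consists of substituting the rewritten recurrence into the tail $A_{m,\ell-m}(p)$; the product term $A_{m+1,0}(p)A_{0,\ell-m-1}(p)$ absorbs into the sum as the new $i = m+1$ summand (with sign $(-1)^{(m+1)+1}$ coming from the outer $(-1)^m$ times the sign of the substitution), and the new tail has the correct sign $(-1)^{m+1}$.

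Finally, taking $m = \ell-1$, the residual becomes $(-1)^{\ell-1} A_{\ell-1,1}(p)$. By the defining cases for $A_{j,k}(p)$ with $k=1$, this equals $(-1)^{\ell-1} A_\phi(p^\ell,1,\ldots,1) = (-1)^{\ell+1} A_\phi(p^\ell,1,\ldots,1)\cdot 1$. Under the convention in the lemma (the second factor in the $m=\ell$ summand being the constant $1$), this is precisely the $m = \ell$ term in \eqref{recur}, so appending it to the sum $\sum_{i=1}^{\ell-1}$ gives the claimed formula. The only real subtlety is the sign bookkeeping in the telescoping and identifying the final residual as the $m=\ell$ summand; everything else is a mechanical application of the recurrence from Lemma \ref{steplemma}.
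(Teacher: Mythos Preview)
Your proposal is correct and follows essentially the same approach as the paper: both repeatedly apply the rearranged form of Lemma~\ref{steplemma} to peel off one product term at a time, with alternating signs, until only the terminal term $(-1)^{\ell+1}A_\phi(p^\ell,1,\ldots,1)$ remains. Your version simply formalizes the paper's phrase ``iterating this process'' as an explicit induction via the interpolating identity, and you stop at the residual $A_{\ell-1,1}(p)$ rather than rewriting it as $A_{\ell,0}(p)$, but these are the same quantity by definition.
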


\begin{proof}  Putting $j=1$ and $k=\ell-1$ into  \eqref{jk-recur}, and rearranging, gives
\begin{equation} A_{0,\ell}(p) =A_{1,0}(p)A_{0,\ell-1}(p)-A_{1,\ell-1}(p).\label{one-pass}\end{equation}Next,  to the term  $A_{1,\ell-1}(p)$ in \eqref{one-pass}, we apply \eqref{jk-recur} with $j=2$ and $k=\ell-2$.  We get 
\begin{equation*} A_{0,\ell}(p) =A_{1,0}(p)A_{0,\ell-1}(p)-A_{2,0}(p)A_{0,\ell-2}(p)+A_{2,\ell-2}(p).\end{equation*}Iterating this process ultimately yields \begin{align*} A_{0,\ell}(p) &=A_{1,0}(p)A_{0,\ell-1}(p)-A_{2,0}(p)A_{0,\ell-2}(p)+A_{3,0}(p)A_{0,\ell-3}(p)-\cdots+(-1)^{\ell+1} A_{\ell,0}(p),\end{align*}which is precisely the statement \eqref{recur}. \end{proof}
 
\begin{proof}[Proof of Proposition \ref{ellth-place}]  We apply strong induction on $\ell$ (for fixed $n$).  
 
 The formula \eqref{ellth-formula} is clearly true in the case $\ell=1$.  So assume that it holds for any of the integers $1,2,\ldots, \ell-1$ in place of $\ell$.  Then, by Lemma 1.6, we have 

 \begin{align}
 A_\phi(1,\ldots,1,\hskip-6pt\underbrace{p}_{\ell^{\rm th}{\rm\, place}}\hskip-5pt,1,\ldots,1)&=\sum_{m=1}^\ell (-1)^{m+1} A_\phi(p^m,1,\ldots,1)\cdot\hskip-20pt
\sum_{(i_1,i_2,\ldots, i_r)\in C_{\ell-m}} \,\prod_{j=1}^r (-1)^{i_j+1}A_\phi( p^{i_j},1,\ldots,1)\nonumber\\&=
\sum_{m=1}^\ell \sum_{(i_1,i_2,\ldots, i_r)\in C_{\ell-m}}\hskip-20pt (-1)^{m+1} A_\phi(p^m,1,\ldots,1) 
 \,\prod_{j=1}^r (-1)^{i_j+1}A_\phi( p^{i_j},1,\ldots,1).\label{recur2}
 \end{align}
 
 But, for $1\le m\le \ell$, $(i_1,i_2,\ldots, i_r)$ is a composition of $\ell-m$ if and only if $(i_1,i_2,\ldots, i_r,m)$ is a composition of $\ell$.  So, putting $m=i_{r+1}$ into \eqref{recur2}, we get
  \begin{align*}A_\phi(1,\ldots,1,\hskip-6pt\underbrace{p}_{\ell^{\rm th}{\rm\, place}}\hskip-5pt,1,\ldots,1)&=
\sum_{(i_1,i_2,\ldots,i_r, i_{r+1})\in C_{\ell }}
 \,\prod_{j=1}^{r+1}(-1)^{i_j+1}A_\phi( p^{i_j},1,\ldots,1). \qedhere\end{align*}
\end{proof}
 
 \section{\large\bf The Example \boldmath $A_\phi(1,p^j,1,\ldots,1)$}
 
 We have:
 
 \begin{proposition}\label{p2-first-result} {\it For $n\ge3$ and $j\in \mathbb Z_{\ge1}$, we have
\begin{equation} A_\phi(1,p^j,1,\ldots,1)=A_\phi(p^j,1,\ldots,1)^2-A_\phi(p^{j-1},1,\ldots,1)A_\phi(p^{j+1},1,\ldots,1). \label{p2-second-place}\end{equation}
In other words, the  combination of Hecke operators whose eigenvalue is $A_\phi(1,p^j,1,\ldots,1)$ is $(T_{p^j})^2-T_{p^{j-1}}T_{p^{j+1}}$.}
\end{proposition}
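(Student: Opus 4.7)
The plan is to apply the Hecke identity (\ref{A-formula}) twice, once to compute $A_\phi(p^j,1,\ldots,1)^2$ and once to compute $A_\phi(p^{j-1},1,\ldots,1)\cdot A_\phi(p^{j+1},1,\ldots,1)$, and then to observe that the two expansions almost cancel, leaving exactly the single term $A_\phi(1,p^j,1,\ldots,1)$.

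First I would specialize (\ref{A-formula}) with $m=m_1=p^j$ and $m_2=\cdots=m_{n-1}=1$. The divisibility constraints $c_i\mid m_i$ force $c_2=\cdots=c_{n-1}=1$, so the only free parameter is $c_1=p^a$ with $0\le a\le j$, and then $c_n=p^{j-a}$. Plugging into the general summand, the first coordinate becomes $m_1c_n/c_1=p^{2j-2a}$, the second becomes $m_2c_1/c_2=p^a$, and all remaining coordinates are $1$. This yields
\begin{equation*}
A_\phi(p^j,1,\ldots,1)^2 \;=\; \sum_{a=0}^{j} A_\phi\bigl(p^{2j-2a},\,p^a,\,1,\ldots,1\bigr).
\end{equation*}

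Next I would repeat the computation with $m=p^{j-1}$ and $m_1=p^{j+1}$ (other $m_i=1$). Again $c_2=\cdots=c_{n-1}=1$ and $c_1=p^a$, but now the constraint $c_1c_n=p^{j-1}$ restricts the range to $0\le a\le j-1$. The first coordinate of the summand is $m_1c_n/c_1=p^{j+1}\cdot p^{j-1-a}/p^a=p^{2j-2a}$ and the second is $p^a$, exactly as before. Thus
\begin{equation*}
A_\phi(p^{j-1},1,\ldots,1)\,A_\phi(p^{j+1},1,\ldots,1) \;=\; \sum_{a=0}^{j-1} A_\phi\bigl(p^{2j-2a},\,p^a,\,1,\ldots,1\bigr).
\end{equation*}
Subtracting these two identities kills every term except the $a=j$ term of the first sum, which is $A_\phi(1,p^j,1,\ldots,1)$, yielding (\ref{p2-second-place}). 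The second assertion of the proposition is then immediate from Proposition \ref{HeckeEigenvalue} together with the fact (already used elsewhere) that $A_\phi(p^k,1,\ldots,1)$ is the eigenvalue of $T_{p^k}$.

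There is no real obstacle here; the only thing to verify with care is the bookkeeping of the ranges of $a$ in the two applications of (\ref{A-formula}), since the whole identity hinges on the fact that the two sums differ by precisely one term. The computation is essentially the $\mathrm{GL}(n)$ analogue of the classical $\mathrm{GL}(2)$ relation $\lambda(p)^2=\lambda(p^2)+1$, rewritten so that the ``extra'' coefficient $A_\phi(1,p^j,1,\ldots,1)$ is isolated.
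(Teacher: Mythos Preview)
Your proof is correct and is essentially identical to the paper's own argument: both apply \eqref{A-formula} first with $m=m_1=p^j$ and then with $m=p^{j-1},\ m_1=p^{j+1}$ (all other $m_i=1$), obtain the same sums $\sum_{a} A_\phi(p^{2j-2a},p^a,1,\ldots,1)$ over $0\le a\le j$ and $0\le a\le j-1$ respectively, and subtract to isolate the $a=j$ term. The only differences are notational (your $a$ is the paper's $k$) and your added remark on the Hecke-operator interpretation, which is fine.
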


\begin{proof}  Let $n\ge3$ and $j\in \mathbb Z_{\ge1}$.  

We first put  $$m=m_1=p^j,\quad m_i=1\ (2\le i\le n-1)$$into \eqref{A-formula}.  For such $m_i$'s and $m$, the set of $n$-tuples $(c_1,c_2,\ldots, c_n)$ such that $c_1c_2\cdots c_n=m$ and $c_i|m_i$ for $1\le i\le n-1$ is just the set
$$\{(p^k,1,\ldots,1,p^{j-k})\in( \mathbb Z_{\ge1})^n\,|\,0\le k\le j\}.$$ 
So \eqref{A-formula} yields
\begin{align}A_\phi(p^j,1,\ldots,1)^2&=\sum_{k=0}^j A\biggl(\frac{p^j\cdot p^{j-k}}{p^k},\frac{1\cdot p^k}{1},\frac{1\cdot1}{1},\ldots,\frac{1\cdot1}{1}\biggr) \nonumber\\&=\sum_{k=0}^j A_\phi(p^{2j-2k},p^k,1,\ldots,1).\label{pjsquared}\end{align}
Next:  into \eqref{A-formula}, we put  $$m= p^{j-1},\quad m_1=p^{j+1},\quad m_i=1\ (2\le i\le n-1).$$In this case,  the set of $n$-tuples $(c_1,c_2,\ldots, c_n)$ such that $c_1c_2\cdots c_n=m$ and $c_i|m_i$ for $1\le i\le n-1$ equals the set$$\{(p^k,1,\ldots,1,p^{j-1-k})\in (\mathbb Z_{\ge1})^n\,|\,0\le k\le j-1\}.$$So \eqref{A-formula} gives
\begin{align}A_\phi(p^{j-1},1,\ldots,1)A_\phi(p^{j+1},1,\ldots,1) &=\sum_{k=0}^{j-1} A\biggl(\frac{p^{j+1}\cdot p^{j-1-k}}{p^k},\frac{1\cdot p^k}{1},\frac{1\cdot1}{1},\ldots,\frac{1\cdot1}{1}\biggr)\nonumber\\&=\sum_{k=0}^{j-1}A_\phi(p^{2j-2k},p^k,1,\ldots,1).\label{pjpm1}\end{align}Subtracting \eqref{pjpm1} from \eqref{pjsquared} yields
$$A_\phi(p^j,1,\ldots,1)^2-A_\phi(p^{j-1},1,\ldots,1)A_\phi(p^{j+1},1,\ldots,1)=A_\phi(p^{2j-2j},p^j,1,\ldots,1)=A_\phi(1,p^j,1,\ldots,1),$$which is the desired result.
 \end{proof}


\begin{thebibliography}{99} 
   
  \bibitem{B1984} 
Bump, D.,   Automorphic Forms on $GL(3, \Bbb R)$, Lecture Notes in Math. {\bf 1083}, Springer-Verlag (1984).
 \vskip 8pt
 
\bibitem{GJ1972}
Godement, R. and Jacquet, H.,
{\it Zeta functions of simple algebras,} Lecture notes in Math. {\bf  260}, Springer-Verlag (1972).
\vskip 8pt

\bibitem{G2006}
 Goldfeld, D., Automorphic forms and L-functions for the group
${\rm GL}(n,\mathbf R)$. With an appendix by Kevin A. Broughan. Cambridge Studies in Advanced Mathematics,
  vol. 99. Cambridge: Cambridge University Press (2006).
   \vskip 8pt
   
   
\bibitem{J1981}  
    Jacquet,H., {\it Dirichlet series for the group $GL(N)$,} in: Automorphic Forms,  Representation Theory and Arithmetic, Tata Inst. of Fund. Res., Springer-Verlag  (1981), 155--164.
 \vskip 8pt
      
\bibitem{PS1975}   
 Piatetski-Shapiro,I.I., 
 {\it Euler subgroups.  Lie groups and their representations,} Proc. Summer School, Bolyai Jnos Math. Soc., Budapest, 1971, Halsted, New York  (1975), 597--620. 
\vskip 8pt
   
 \bibitem{Sh1973}
  Shalika,J.A., {\it  On the multiplicity of the spectrum of the space of cusp forms of $GL_n$,}  Bull. Amer. Math. Soc.  {\bf 79}  (1973), 454--461. 
 \vskip 8pt
 
 \bibitem{Sh1974}
   Shalika, J.A.,  {\it The multiplicity one theorem for GL(n),} Annals of Math. {\bf 100} ((1974), 171--193.
 


 \end{thebibliography}
   \end{document}